\documentclass{article}
\usepackage{amssymb,amsmath,amsthm,graphicx}
\usepackage[all,color]{xy}

\textheight 8.5in
\textwidth 6.5 in
\oddsidemargin 0in
\topmargin 0in

\def\qed{\hfill {\hbox{${\vcenter{\vbox{               
   \hrule height 0.4pt\hbox{\vrule width 0.4pt height 6pt
   \kern5pt\vrule width 0.4pt}\hrule height 0.4pt}}}$}}}

\newtheorem{theorem}{Theorem}

\newtheorem{proposition}[theorem]{Proposition}
\newtheorem{corollary}[theorem]{Corollary}

\theoremstyle{definition}
\newtheorem{example}{Example}
\newtheorem{definition}{Definition}
\newtheorem{remark}{Remark}

\date{}

\title{\Large \textbf{Entropic Niebrzydowski Tribrackets}}

\author{Jieon Kim\footnote{Email:jieonkim7@gmail.com. Supported by Young Researchers Program through the National Research Foundation of Korea (NRF) funded by the Ministry of Education, Science and Technology (NRF-2018R1C1B6007021)} \and
Sam Nelson\footnote{Email: Sam.Nelson@cmc.edu. Partially supported by Simons Foundation Collaboration Grant 702597.}}

\begin{document}
\maketitle

\begin{abstract}
We introduce the notion of \textit{entropic Niebrzydowski tribrackets}
or just \textit{entropic tribrackets}, analogous to
\textit{entropic} (also known as \textit{abelian} or \textit{medial}) 
quandles and biquandles. We show that if $X$ is a finite entropic 
tribracket then for any tribracket $T$, the homset $\mathrm{Hom}(T,X)$
(and in particular, for any oriented link $L$, the homset 
$\mathrm{Hom}(\mathcal{T}(L),X)$) also has the structure of an entropic 
tribracket. This operation yields a product on the category of entropic
tribrackets; we compute the operation table for entropic tribrackets of small
cardinality and prove a few results. We conjecture that this structure 
can be used to distinguish links which have the same counting invariant 
with respect to a chosen entropic coloring tribracket $X$.
\end{abstract}

\parbox{5.5in}{\textsc{Keywords}: Tribrackets, Entropic tribrackets, 
Knot invariants 

\smallskip

\textsc{2020 MSC:} 57K12}

\section{\large\textbf{Introduction}}\label{I}

In \cite{CN} the second listed author together with collaborator Alisa Crans 
considered \textit{abelian quandles}, also known as \textit{medial quandles} or
\textit{entropic quandles}. This class of quandles is characterized 
by the the property that for any pair of such quandles, the set of 
homomorphisms from one to the other inherits a quandle structure in a natural
way. The notion was generalized to the case of biquandles and it was shown 
that for such quandles and biquandles, the (bi)quandle structure of
the homset from the fundamental (bi)quandle of an oriented knot or link
to a finite (bi)quandle $X$ is also an invariant of knots and links.

In this paper we consider the analogous question for a more recent algebraic
structure related to oriented knots and link known as \textit{Niebrzydowski 
tribrackets} (or just \textit{tribrackets}), also known as 
\textit{knot-theoretic ternary quasigroups} \cite{N,NOO,N2,O}. 
Tribrackets are sets with a ternary operation satisfying axioms coming from the
Reidemeister moves in knot theory. In particular, finite tribrackets
define invariants of knots and links in terms of tribracket
homsets, which can be computed from diagrams. The elements of such a homset
can be represented as \textit{region colorings}, i.e. assignments of elements 
of the finite tribracket to the regions in a diagram of the knot or link
satisfying a condition at each crossing. This representation of tribracket
homsets motivates a natural tribracket-style operation on the homset itself.
We say a tribracket is \textit{entropic} if this operation satisfies the
tribracket axioms. Other related structures include \textit{tridles} \cite{Y},
\textit{quazoids} \cite{K} and \textit{biquasiles} \cite{CDN,KN,DN}.

If $X$ is a finite tribracket, then for any oriented knot or link $L$, the 
homset $\mathrm{Hom}(\mathcal{T}(L),X)$ from the fundamental tribracket of 
$L$ to $X$ is an invariant of knots and links. We conjecture that, as in 
the (bi)quandle case, there should be a finite tribracket $X$ and links $L,L'$
such that the homset tribrackets have the same cardinality but are not 
isomorphic, and we ask what is the smallest such example.

The paper is organized as follows. In Section \ref{T} we recall 
the basics of tribrackets. In Section \ref{ET} we introduce the definition
of entropic tribrackets and show that homsets from tribrackets to entropic 
tribrackets have the structure of entropic tribrackets themselves in a 
natural way. We use this fact to introduce a product $\ast$ on the category
of entropic tribrackets. In Section \ref{E} we collect some examples and 
computations, computing the operation table of the homset product for 
isomorphism classes of tribrackets of small cardinality and establishing a 
few results about the structure of these sets.  We conclude 
in Section \ref{Q} with some questions, conjectures and future directions.

\section{\large\textbf{Tribrackets}}\label{T}

We begin with a definition; see \cite{NOO,N} for more.

\begin{definition}
Let $X$ be a set. A \textit{horizontal tribracket} structure on $X$ 
is a ternary operation $X\times X\times X\to X$ denoted by $[x,y,z]$
satisfying the properties
\begin{itemize}
\item[(i)] For all $x,y,z\in X$ there are unique elements $a,b,c\in X$
satisfying
\[[a,x,y]=z,\quad [x,b,y]=z\quad \mathrm{and}\quad [x,y,c]=z\]
and
\item[(ii)] For all $x,y,z,w\in X$ we have
\[[y,[x,y,z],[x,y,w]] = [z,[x,y,z],[x,z,w]]=[w,[x,y,w],[x,z,w]].\]
\end{itemize}
\end{definition}

\begin{remark}
For every horizontal tribracket structure on a set $X$ there is a related
\textit{vertical tribracket} structure; see \cite{NOO} for more details.
In this paper we will stick to the horizontal tribracket notation.
\end{remark} 

The tribracket axioms are motivated by the Reidemeister moves using the 
region-coloring rule
\[\includegraphics{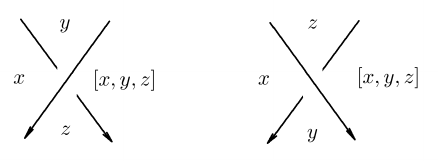}.\]
The reader can then easily verify (or check in \cite{NOO}) that the tribracket 
axioms are the condition required so that for any tribracket-colored oriented
knot or link diagram before a Reidemeister move, there is a unique
tribracket coloring of the diagram after the move which agrees with the 
original coloring outside the neighborhood of the move.

\begin{example}
A commutative ring $R$ with identity becomes a tribracket with a choice
of two units $t,s\in R^{\times}$ via the operation
\[[x,y,z]=ty+sz-tsx.\]
This tribracket structure is known as an \textit{Alexander tribracket.}
\end{example}

\begin{example}
A group $G$ is a tribracket known as a \textit{Dehn tribracket} with
\[[x,y,z]=yx^{-1}z.\]
\end{example}

\begin{example}
A tribracket structure on a finite set $X=\{1,2,\dots,n\}$ can be given
by an \textit{operation 3-tensor}, i.e. an $n$-tuple of $n\times n$ matrices.
To read such a table, we note that $[i,j,k]$ is the entry in row $j$ column
$k$ of matrix $i$. For instance, the 3-tensor
\[\left[\left[\begin{array}{rr} 1 & 2 \\ 2 & 1\end{array}\right],
\left[\begin{array}{rr} 2 & 1\\ 1 & 2\end{array}\right]\right]\]
defines a tribracket structure on $X=\{1,2\}$ and we have 
$[1,1,2]=2$.
\end{example}

\begin{example}
Every oriented knot or link $L$ has a \textit{fundamental tribracket} 
$\mathcal{T}(L)$ which can be described by a presentation with a generator
for each region in the planar complement of a diagram of $L$ and with
a relation at each crossing given by the coloring rule above.
\end{example}

\begin{definition}
A map $f:X\to Y$ between tribrackets is a \textit{tribracket
homomorphism} if for all $x,y,z\in X$ we have
\[[f(x),f(y),f(z)]=f([x,y,z]).\]
\end{definition}

\begin{theorem}
The inverse of a bijective tribracket homomorphism $f:X\to Y$
is a tribracket homomorphism $f^{-1}:Y\to X$.
\end{theorem}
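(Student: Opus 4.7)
The plan is to exploit the bijectivity of $f$ to reduce the claim about $f^{-1}$ to the known homomorphism property of $f$. Write $g = f^{-1}$, and take arbitrary elements $u, v, w \in Y$. Since $f$ is a bijection, there exist unique $x, y, z \in X$ with $f(x) = u$, $f(y) = v$, $f(z) = w$, equivalently $g(u) = x$, $g(v) = y$, $g(w) = z$.

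The goal is to verify $[g(u), g(v), g(w)] = g([u, v, w])$. Substituting, this becomes $[x, y, z] = g([f(x), f(y), f(z)])$. Apply the hypothesis that $f$ is a tribracket homomorphism to rewrite the right-hand side: $g([f(x), f(y), f(z)]) = g(f([x, y, z])) = [x, y, z]$, since $g \circ f = \mathrm{id}_X$. This yields the desired equality.

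There is essentially no obstacle here beyond keeping the notation straight; the argument is the standard one for showing that the inverse of a bijective morphism in an equational algebraic category is again a morphism, and it relies only on the fact that the tribracket axiom to be preserved, $[f(x),f(y),f(z)] = f([x,y,z])$, is an equation that can be read in either direction once both $f$ and $f^{-1}$ are available. One might alternatively invoke a general universal-algebra theorem to this effect, but the direct computation above is shorter than any such appeal.
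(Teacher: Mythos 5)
Your proof is correct and is essentially identical to the paper's: both arguments use surjectivity to write each element of $Y$ as $f$ of a preimage, apply the homomorphism equation for $f$, and cancel $f^{-1}\circ f$. The only difference is notational (you name the preimages $x,y,z$ up front, while the paper writes $f(f^{-1}(x))$ inline).
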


\begin{proof}
We must show that $f^{-1}([x,y,z])=[f^{-1}(x),f^{-1}(y),f^{-1}(z)]$. 
We have
\begin{eqnarray*}
f^{-1}([x,y,z]) 
& = & f^{-1}([f(f^{-1}(x)),f(f^{-1}(y)),f(f^{-1}(z))]) \\
& = & f^{-1} f([f^{-1}(x),f^{-1}(y),f^{-1}(z)]) \\
& = & [f^{-1}(x),f^{-1}(y),f^{-1}(z)] 
\end{eqnarray*}
as required.
\end{proof}

\begin{definition}
A bijective tribracket homomorphism is an \textit{isomorphism}.
\end{definition}

Now let $X$ be a finite tribracket and $L$ an oriented knot or link. Then the
\textit{tribracket homset invariant} is the set
\[\mathrm{Hom}(\mathcal{T}(L),X)=\{f:\mathcal{T}(L)\to X \ 
\mathrm{homomorphism}\}\]
of tribracket homomorphisms from the fundamental tribracket of $L$
to $X$. It is an invariant of knots and links by construction since
Reidemeister moves induce isomorphisms of the fundamental tribracket.
Elements of the homset can be represented as colorings of a diagram of $L$,
with different diagrams representing the same coloring if they are related
by $X$-colored Reidemeister moves.

More precisely, a coloring of a diagram by elements of $X$ is an assignment of 
an image in $X$ to each generator of $\mathcal{T}(L)$; such an assignment
determines a tribracket homomorphism provided the crossing relations are
satisfied in $X$, and every homomorphism $f:\mathcal{T}(L)\to X$ has such
a representation. Colorings of diagrams are analogous to representing 
linear transformations as matrices whose columns are the images of basis 
elements, and Reidemeister moves are analogous to changes of basis.

In particular the cardinality of the homset is a non-negative integer-valued
invariant of knots and links known as the \textit{tribracket counting
invariant}, denoted $\Phi_X^{\mathbb{Z}}(L)=|\mathrm{Hom}(\mathcal{T}(L),X)|$.


\section{\large\textbf{Entropic Tribrackets}}\label{ET}

\begin{definition}\label{def1}
We say a tribracket is \textit{entropic} if for all $x,y,z,u,v,w,a,b,c\in X$
we have
\[[[x,y,z],[u,v,w],[a,b,c]]=[[x,u,a],[y,v,b],[z,w,c]].\]
\end{definition}

Our motivation for Definition \ref{def1} is to find the tribracket version
of the abelian property for quandles. In \cite{CN} this condition is shown to
be precisely the condition required for the homset $\mathrm{Hom}(Q(K),X)$
of quandle colorings of a diagram $D$ representing a knot $K$ to be a quandle
under the arcwise operation as shown:
\[\includegraphics{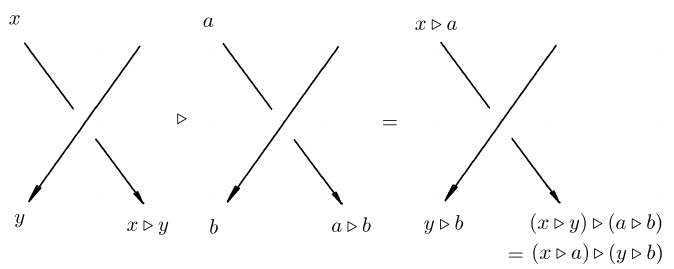}\]

Applying this idea to the tribracket case, we have
\begin{equation} \scalebox{0.9}{\includegraphics{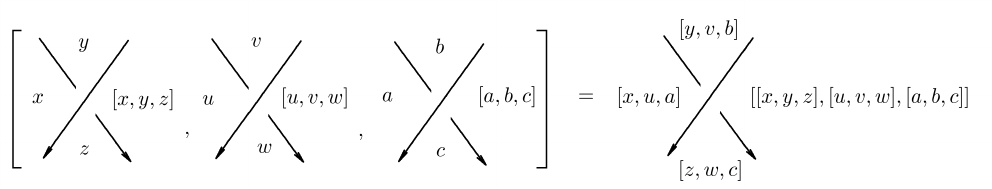}} \label{eq1}\end{equation}
which yields the condition in Definition \ref{def1}.

\begin{example}
Alexander tribrackets are entropic:
\begin{eqnarray*}
[[x,y,z],[u,v,w],[a,b,c]]  
& = & -ts[x,y,z]+t[u,v,w]+s[a,b,c] \\  
& = & -ts(-tsx+ty+sz)+t(-tsu+tv+sw)+s(-tsa+tb+sc) \\  
& = & t^2s^2x-t^2sy-ts^2z-t^2su+t^2v+tsw-ts^2a+tsb+s^2c
\end{eqnarray*}
and
\begin{eqnarray*}
[[x,u,a],[y,v,b],[z,w,c]] 
& = & -ts[x,u,a]+t[y,v,b]+s[z,w,c]\\ 
& = & -ts(-tsx+tu+sa)+t(-tsy+tv+sb)+s(-tsz+tw+sc)\\ 
& = & t^2s^2x-t^2su-ts^2a-t^2sy+t^2v+tsb-ts^2z+tsw+s^2c \\
& = & t^2s^2x-t^2sy-ts^2z-t^2su+t^2v+tsw-ts^2a+tsb+s^2c
\end{eqnarray*}
as required.
\end{example}

\begin{example}
Dehn tribrackets are not generally entropic:
\begin{eqnarray*}
[[x,y,z],[u,v,w],[a,b,c]]  
& = & [a,b,c][x,y,z]^{-1}[u,v,w]\\
& = & ca^{-1}b(zx^{-1}y)^{-1}wu^{-1}v\\
& = & ca^{-1}by^{-1}xz^{-1}wu^{-1}v
\end{eqnarray*}
while 
\begin{eqnarray*}
[[x,u,a],[y,v,b],[z,w,c]] 
& = & [z,w,c][x,u,a]^{-1}[y,v,b]\\
& = & cz^{-1}w(ax^{-1}u)^{-1}by^{-1}v\\
& = & cz^{-1}wu^{-1}xa^{-1}by^{-1}v
\end{eqnarray*}
which is not equal to $ca^{-1}by^{-1}xz^{-1}wu^{-1}v$ in general.

More precisely, for an abelian group $G$ the Dehn tribracket is entropic
but for non-abelian groups the Dehn tribracket is generally not entropic.
For example, the Dehn tribracket of $S_3$ is non-entropic.
\end{example}

\begin{proposition}
Let $T$ be a tribracket with tribracket operation $[\ ,\ ,\ ]_T$ and 
$X$ a finite entropic tribracket with tribracket operation $[\ ,\ ,\ ]_X$. 
Then the operation 
$[\ ,\ ,\ ]_H:(\mathrm{Hom}(T,X))^3\to\mathrm{Hom}(T,X)$ defined by
\[[f,g,h]_H(t)=[f(t),g(t),h(t)]_X\]
defines an entropic tribracket structure on the homset.
\end{proposition}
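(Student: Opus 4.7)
The plan is to verify three things in turn: first, that $[f,g,h]_H$ as defined actually lies in $\mathrm{Hom}(T,X)$; second, that $[\ ,\ ,\ ]_H$ satisfies tribracket axioms (i) and (ii); and third, that $[\ ,\ ,\ ]_H$ is entropic. All three reduce to pointwise calculations in $X$, but the nontrivial steps invoke the entropic identity in $X$ in an essential way.

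First I would check that $[f,g,h]_H$ is a tribracket homomorphism. Fixing $t_1,t_2,t_3\in T$ and using that $f,g,h$ each preserve $[\ ,\ ,\ ]$, the value $[f,g,h]_H([t_1,t_2,t_3]_T)$ rewrites as
\[[[f(t_1),f(t_2),f(t_3)]_X,[g(t_1),g(t_2),g(t_3)]_X,[h(t_1),h(t_2),h(t_3)]_X]_X,\]
and one application of the entropic identity in $X$ rearranges this into
\[[[f(t_1),g(t_1),h(t_1)]_X,[f(t_2),g(t_2),h(t_2)]_X,[f(t_3),g(t_3),h(t_3)]_X]_X,\]
which is exactly $[[f,g,h]_H(t_1),[f,g,h]_H(t_2),[f,g,h]_H(t_3)]_X$, as required.

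Next come the tribracket axioms for $[\ ,\ ,\ ]_H$. Axiom (ii) is immediate: evaluating both sides at any $t\in T$ reduces to the corresponding instance of axiom (ii) in $X$ for the values at $t$. Axiom (i)---unique solvability---is the step I expect to be the main obstacle. Pointwise uniqueness in $X$ determines a unique set-theoretic candidate $k\colon T\to X$ by $[k(t),f(t),g(t)]_X=h(t)$ (and analogously for the other two positions), so uniqueness of a solution in $\mathrm{Hom}(T,X)$ is automatic; the work is in proving that this $k$ is actually a homomorphism. For that, I would apply the defining equation at $[t_1,t_2,t_3]_T$ and expand via the homomorphism property of $f,g,h$; then the entropic identity in $X$, applied to the nine values $k(t_i),f(t_i),g(t_i)$, shows that $[k(t_1),k(t_2),k(t_3)]_X$ solves the same first-slot equation as $k([t_1,t_2,t_3]_T)$, so pointwise uniqueness in $X$ forces the two to coincide. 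The other two positions are handled identically.

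Finally, the entropic identity for $[\ ,\ ,\ ]_H$ is a direct pointwise translation of the one for $X$: given nine homomorphisms $f_1,\ldots,f_9$ and any $t\in T$, both sides of the required identity in $\mathrm{Hom}(T,X)$ evaluate at $t$ to the two sides of the entropic identity in $X$ applied to $f_1(t),\ldots,f_9(t)$, which agree by hypothesis.
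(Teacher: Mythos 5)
Your proposal is correct and follows essentially the same route as the paper: pointwise verification throughout, with the entropic identity in $X$ invoked both for closure of $[\ ,\ ,\ ]_H$ on the homset and for showing the axiom (i) solution is a homomorphism. Your treatment of axiom (i) is in fact more explicit than the paper's (which compresses that step into a one-line appeal to the left-inverse operation), but the underlying argument --- expand at $[t_1,t_2,t_3]_T$, apply the entropic identity to the nine values, and conclude by uniqueness in $X$ --- is the same.
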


\begin{proof}
Let $f,g,h:T\to X$ be tribracket homomorphisms and $x,y,z\in X$. 
We must show that $[\ ,\ ,\ ]_H$ satisfies the tribracket axioms and the
entropic condition.

First let us show that $[f,g,h]_H$ is an element of the homset. Consider the
evaluation of $[f,g,h]$ at the element $[x,y,z]_T\in T$. By the definition of 
$[f,g,h]$, we have
\[[f,g,h]_H([x,y,z]_T) = [f([x,y,z]_T),g([x,y,z]_T),h([x,y,z]_T)]_X \]
and since $f,g,h$ are tribracket homomorphisms this is equal to
\[[[f(x),f(y),f(z)]_X,[g(x),g(y),g(z)]_X,[h(x),h(y),h(z)]_X]_X\]
Then the fact that $X$ is entropic implies this is equal to
\[[[f(x),g(x),h(x)]_X,[f(y),g(y),h(y)]_X,[f(z),g(z),h(z)]_X]_X\]
and be definition of $[f,g,h]$ we have
\[[[f,g,h]_H(x),[f,g,h]_H(y),[f,g,h]_H(z)]_X\]
as required.

For axiom (i), we must show that there exist unique tribracket homomorphisms 
$a,b,c\in \mathrm{Hom}(T,X)$ such that
\[[a(t),f(t),g(t)]_x=h(t),\quad [f(t),b(t),g(t)]_x=h(t)\quad 
\mathrm{and}\quad [f(t),g(t),c(t)]_x=h(t).\]
Let us consider the case $[a(t),f(t),g(t)]=h(t)$; the others are similar.
For each $t\in T$, evaluation at $t$ of $f,g,h$ determines uniquely the value 
of $a(t)$. In this way the functions $a,b,c:T\to X$ are defined and satisfy 
the conditions of axiom (i). To see that the function $a:T\to X$ thus defined 
is a tribracket homomorphism, suppose $x,y,z\in T$; we must show that 
$a([x,y,z]_T)=[a(x),a(y),a(z)]_x$. Applying the left-inverse operation 
in $X$, the reasoning in the previous paragraph yields the result.

To see that for all $f,g,h,k\in \mathrm{Hom}(T,X)$ we have
\[[g,[f,g,h]_H,[f,g,k]_H]_H = [h,[f,g,h]_H,[f,h,k]_H]_H
=[k,[f,g,k]_H,[f,h,k]_H]_H\]
we simply note that after evaluation at each element of $T$ the equations
hold in $X$ since $X$ is a tribracket, and that since this is the case for all
$t\in T$, the equation holds for $f,g,h,k$ as maps, as required.

A similar argument shows that $\mathrm{Hom}(T,X)$ is entropic, as we are done.
\end{proof}

\begin{proposition}
Let $X$ be an entropic tribracket.
An isomorphism $\phi:T\to T'$ of tribrackets induces an isomorphism
$\phi^*:\mathrm{Hom}(T',X)\to \mathrm{Hom}(T,X)$.
\end{proposition}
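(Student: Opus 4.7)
The plan is to construct $\phi^*$ as the usual pullback (precomposition) map and to verify in turn that (a) it lands in $\mathrm{Hom}(T,X)$, (b) it is a bijection, and (c) it respects the entropic tribracket operation $[\ ,\ ,\ ]_H$ defined in the preceding proposition.

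First I would define $\phi^*:\mathrm{Hom}(T',X)\to\mathrm{Hom}(T,X)$ by $\phi^*(f)=f\circ\phi$. Since $\phi$ is a tribracket homomorphism and $f$ is a tribracket homomorphism, the composition $f\circ\phi$ is a tribracket homomorphism $T\to X$, so $\phi^*(f)\in\mathrm{Hom}(T,X)$. For bijectivity, I would use the theorem stated earlier in the paper that the inverse of a bijective tribracket homomorphism is a tribracket homomorphism: this gives $\phi^{-1}:T'\to T$ as a tribracket homomorphism, and then the map $(\phi^{-1})^*:\mathrm{Hom}(T,X)\to\mathrm{Hom}(T',X)$ sending $g\mapsto g\circ\phi^{-1}$ is a two-sided inverse of $\phi^*$ by associativity of composition.

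Next I would check that $\phi^*$ is a tribracket homomorphism with respect to the homset tribracket operations. For $f,g,h\in\mathrm{Hom}(T',X)$ and any $t\in T$,
\[\phi^*([f,g,h]_H)(t)=[f,g,h]_H(\phi(t))=[f(\phi(t)),g(\phi(t)),h(\phi(t))]_X=[\phi^*(f)(t),\phi^*(g)(t),\phi^*(h)(t)]_X,\]
which is exactly $[\phi^*(f),\phi^*(g),\phi^*(h)]_H(t)$ by the definition of $[\ ,\ ,\ ]_H$. Since this holds for every $t\in T$, we get the desired equality of homset elements $\phi^*([f,g,h]_H)=[\phi^*(f),\phi^*(g),\phi^*(h)]_H$.

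There is essentially no hard step here; the argument is entirely formal, and the only points to be careful about are invoking the earlier theorem to know that $\phi^{-1}$ is itself a homomorphism (so that $(\phi^{-1})^*$ makes sense as an inverse on the homset level) and remembering that equality of homomorphisms can be checked pointwise, which is what converts the pointwise identity above into the required homset identity.
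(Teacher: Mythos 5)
Your proof is correct and follows essentially the same route as the paper's: define $\phi^*$ as precomposition, verify the homomorphism property pointwise using the definition of $[\ ,\ ,\ ]_H$, and obtain bijectivity from $(\phi^{-1})^*$. You are slightly more explicit than the paper about why $\phi^*(f)$ lands in $\mathrm{Hom}(T,X)$ and why $(\phi^{-1})^*$ is a two-sided inverse, but the substance is identical.
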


\begin{proof}

Let $\phi:T\to T'$ be an isomorphism of tribrackets. Then 
define $\phi^*:\mathrm{Hom}(T',X)\to \mathrm{Hom}(T,X)$ by 
\[\phi^*(f)(t)=f(\phi(t))\]
for every $f:T\to X$. Then we have
\[\phi^*([f,g,h])(t)
=[f,g,h](\phi(t))
=[f(\phi(t)),g(\phi(t)),h(\phi(t))]
=[\phi^*f(t),\phi^*g(t),\phi^*h(t)]
\]
and $\phi^*$ is a homomorphism of tribrackets. A similar argument shows
that $(\phi^{-1})^*=(\phi^{*})^{-1}:\mathrm{Hom}(T,X)\to \mathrm{Hom}(T',X)$ 
is also a homomorphism, and hence $\phi^*$ is an isomorphism.
\end{proof}

\begin{corollary}
Let $L$ be an oriented knot or link represented by a diagram $D$ and let
$X$ be an entropic tribracket. Then the homset 
$\mathrm{Hom}(\mathcal{T}(L),X)$ is
an entropic tribracket under the operation in Equation (\ref{eq1}). Moreover, 
if $L$ is ambient isotopic to $L'$, then $\mathrm{Hom}(\mathcal{T}(L),X)$
is isomorphic as a tribracket to $\mathrm{Hom}(\mathcal{T}(L'),X)$.
\end{corollary}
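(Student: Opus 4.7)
The plan is to combine the two preceding propositions with the well-known fact that the fundamental tribracket is a knot invariant. The first part of the corollary is essentially a direct application of the first proposition: take $T = \mathcal{T}(L)$, which is itself a tribracket, and apply the result that $\mathrm{Hom}(T, X)$ inherits an entropic tribracket structure whenever $X$ is a finite entropic tribracket. The only thing to verify is that the pointwise operation $[f,g,h]_H(t) = [f(t), g(t), h(t)]_X$ used in the first proposition actually agrees with the pictorial region-wise operation depicted in Equation (\ref{eq1}). This is immediate from the presentation of $\mathcal{T}(L)$: a homomorphism $f: \mathcal{T}(L) \to X$ is the same data as an $X$-coloring of the regions of $D$, and applying $[\ ,\ ,\ ]_X$ at each region gives exactly the pointwise formula evaluated on the corresponding generator of $\mathcal{T}(L)$.

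For the second part, I would invoke the standard fact, recalled earlier in the paper, that Reidemeister moves induce isomorphisms of the fundamental tribracket, so that ambient isotopic links $L$ and $L'$ have isomorphic fundamental tribrackets $\phi: \mathcal{T}(L) \to \mathcal{T}(L')$. Applying the second proposition to this $\phi$ yields a tribracket isomorphism $\phi^*: \mathrm{Hom}(\mathcal{T}(L'), X) \to \mathrm{Hom}(\mathcal{T}(L), X)$, which is exactly the required isomorphism of homset tribrackets.

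I do not expect any substantive obstacle here; the only point requiring a sentence of justification is the identification of the pointwise homset operation with the picture in Equation (\ref{eq1}), and this is a matter of unpacking the definition of $\mathcal{T}(L)$ as the tribracket generated by region labels modulo the crossing relations. Thus the proof will consist of two short paragraphs: one identifying the operations and invoking the first proposition, and one invoking Reidemeister invariance of $\mathcal{T}$ together with the second proposition.
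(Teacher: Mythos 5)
Your argument is exactly the intended one: the paper states this corollary without proof, as an immediate consequence of the two preceding propositions together with the fact (recalled in Section \ref{T}) that Reidemeister moves induce isomorphisms of the fundamental tribracket. Your identification of the pointwise homset operation with the region-wise picture in Equation (\ref{eq1}), and your application of $\phi^*$ to the isomorphism $\mathcal{T}(L)\cong\mathcal{T}(L')$, correctly supply the details the paper leaves implicit.
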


\section{\large\textbf{Examples and Computations}}\label{E}

In this section we will illustrate the homset construction and resulting
invariant with some examples.

\begin{example}\label{ex7}
Let $X=\{1,2\}$ have the tribracket structure given by the 3-tensor
\[\left[\left[\begin{array}{rr} 1 & 2 \\ 2 & 1\end{array}\right],
\left[\begin{array}{rr} 2 & 1\\ 1 & 2\end{array}\right]\right].\]
We can verify via computer (or, in principle, by hand) that this
tribracket structure satisfies the entropic condition by checking that
the condition
\[[[x,y,z],[u,v,w],[a,b,c]]=[[x,u,a],[y,v,b],[z,w,c]]\]
holds for all $2^9$ assignments of elements of $X$ to the variables 
$x,y,z,u,v,w,a,b,c$; for example, setting 
$(x,y,z,u,v,w,a,b,c)=(1,1,2,2,1,2,1,2,1)$ we have
\begin{eqnarray*}
{}[[x,y,z],[u,v,w],[a,b,c]] 
& = & [[1,1,2],[2,1,2],[1,2,1]] \\
& = & [2,1,2] \\
& = & 1 \\
& = & [2,2,1] \\  
& = & [[1,2,1],[1,1,2],[2,2,1]]
\end{eqnarray*}
and so forth.
\end{example}

\begin{example}\label{X4}
We then compute that the trefoil knot $3_1$ has four colorings by the 
tribracket $X$ in Example \ref{ex7} which
we can number 1 through 4, namely
\[\includegraphics{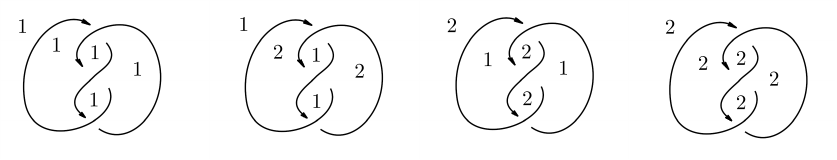}.\]
Then in the homset, we can compute that for example
\[\includegraphics{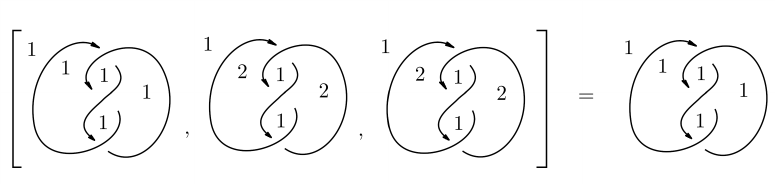}.\]
Filling in the rest of the tensor and replacing the diagrams with just the
numbers 1-4, we obtain the homset tribracket tensor
\[\left[
\left[\begin{array}{rrrr}
1 & 2 & 3 & 4 \\
2 & 1 & 4 & 3 \\
3 & 4 & 1 & 2 \\
4 & 3 & 2 & 1
\end{array}\right],
\left[\begin{array}{rrrr}
2 & 1 & 4 & 3 \\
1 & 2 & 3 & 4 \\
4 & 3 & 2 & 1 \\
3 & 4 & 1 & 2
\end{array}\right],
\left[\begin{array}{rrrr}
3 & 4 & 1 & 2 \\
4 & 3 & 2 & 1 \\
1 & 2 & 3 & 4 \\
2 & 1 & 4 & 3
\end{array}\right],
\left[\begin{array}{rrrr}
4 & 3 & 2 & 1 \\
3 & 4 & 1 & 2 \\
2 & 1 & 4 & 3 \\
1 & 2 & 3 & 4
\end{array}\right]
\right].\]
\end{example}

The homset construction defines a kind of binary product on the objects in the
category of finite entropic tribrackets by setting $X\ast Y=\mathrm{Hom}(X,Y)$.
Using \texttt{python} code, we computed the set of entropic tribrackets
with up to four elements up to isomorphism. In the table below the subscript
gives the cardinality of the tribracket and the superscript is the numbering
in the table; we use $T_0^1$ for the empty tribracket and $T_1^1$ for the
tribracket of one element. 

\[\begin{array}{r|l}
\mathrm{Tribracket} & \mathrm{Operation\ Tensor} \\ \hline & \\
T_2^1 &  
\left[\left[\begin{array}{rr} 1 & 2 \\ 2 & 1 \end{array}\right],
      \left[\begin{array}{rr} 2 & 1 \\ 1 & 2\end{array}\right]\right] \\ & \\
T_2^2 &  
\left[\left[\begin{array}{rr} 2 & 1 \\ 1 & 2 \end{array}\right],
      \left[\begin{array}{rr} 1 & 2 \\ 2 & 1\end{array}\right]\right] \\ & \\ \hline & \\
T_3^1 &  
\left[\left[\begin{array}{rrr} 1 & 2 & 3 \\ 2 & 3 & 1 \\ 3 & 1 & 2 \end{array}\right],
      \left[\begin{array}{rrr} 3 & 1 & 2 \\ 1 & 2 & 3 \\ 2 & 3 & 1 \end{array}\right],
      \left[\begin{array}{rrr} 2 & 3 & 1 \\ 3 & 1 & 2 \\ 1 & 2 & 3 \end{array}\right]\right] \\ & \\
T_3^2 &  
\left[\left[\begin{array}{rrr} 1 & 2 & 3 \\ 3 & 1 & 2 \\ 2 & 3 & 1 \end{array}\right],
      \left[\begin{array}{rrr} 2 & 3 & 1 \\ 1 & 2 & 3 \\ 3 & 1 & 2 \end{array}\right],
      \left[\begin{array}{rrr} 3 & 1 & 2 \\ 2 & 3 & 1 \\ 1 & 2 & 3 \end{array}\right]\right] \\ & \\
T_3^3 &  
\left[\left[\begin{array}{rrr} 1 & 3 & 2 \\ 2 & 1 & 3 \\ 3 & 2 & 1 \end{array}\right],
      \left[\begin{array}{rrr} 2 & 1 & 3 \\ 3 & 2 & 1 \\ 1 & 3 & 2 \end{array}\right],
      \left[\begin{array}{rrr} 3 & 2 & 1 \\ 1 & 3 & 2 \\ 2 & 1 & 3 \end{array}\right]\right] \\ & \\
T_3^4 &  
\left[\left[\begin{array}{rrr} 1 & 3 & 2 \\ 3 & 2 & 1 \\ 2 & 1 & 3 \end{array}\right],
      \left[\begin{array}{rrr} 3 & 2 & 1 \\ 2 & 1 & 3 \\ 1 & 3 & 2 \end{array}\right],
      \left[\begin{array}{rrr} 2 & 1 & 3 \\ 1 & 3 & 2 \\ 3 & 2 & 1 \end{array}\right]\right] \\ & \\
T_3^5 &  
\left[\left[\begin{array}{rrr} 2 & 1 & 3 \\ 3 & 2 & 1 \\ 1 & 3 & 2 \end{array}\right],
      \left[\begin{array}{rrr} 3 & 2 & 1 \\ 1 & 3 & 2 \\ 2 & 1 & 3 \end{array}\right],
      \left[\begin{array}{rrr} 1 & 3 & 2 \\ 2 & 1 & 3 \\ 3 & 2 & 1 \end{array}\right]\right] \\ & \\
T_3^6 &  
\left[\left[\begin{array}{rrr} 2 & 3 & 1 \\ 1 & 2 & 3 \\ 3 & 1 & 2 \end{array}\right],
      \left[\begin{array}{rrr} 3 & 1 & 2 \\ 2 & 3 & 1 \\ 1 & 2 & 3 \end{array}\right],
      \left[\begin{array}{rrr} 1 & 2 & 3 \\ 3 & 1 & 2 \\ 2 & 3 & 1 \end{array}\right]\right] \\ & \\
T_3^7 &  
\left[\left[\begin{array}{rrr} 2 & 3 & 1 \\ 3 & 1 & 2 \\ 1 & 2 & 3 \end{array}\right],
      \left[\begin{array}{rrr} 1 & 2 & 3 \\ 2 & 3 & 1 \\ 3 & 1 & 2 \end{array}\right],
      \left[\begin{array}{rrr} 3 & 1 & 2 \\ 1 & 2 & 3 \\ 2 & 3 & 1 \end{array}\right]\right] \\ 
\end{array}\]

\begin{remark}
We note that the empty set is an entropic tribracket, which we 
denote as $T_0^1$, that the homset from any tribracket to the empty
tribracket is the empty tribracket, and that the homset from $T_0^1$ to 
any nonempty entropic tribracket is the one-element tribracket $T_1^1$ 
whose single element is the empty map.
\end{remark}

Then the product table for the homset product $X\ast Y=\mathrm{Hom}(X,Y)$ 
for entropic tribrackets of small cardinality is 
\[
\begin{array}{r|rrrrrrrrrrr}
\ast & T_0^1 & T_1^1 & T_2^1 & T_2^2 & T_3^1 & T_3^2 & T_3^3 & T_3^4 & T_3^5 & T_3^6 & T_3^7 \\ \hline 
T_0^1 & T_0^1 & T_1^1 & T_1^1 & T_1^1 & T_1^1 & T_1^1 & T_1^1 & T_1^1 & T_1^1 & T_1^1 & T_1^1 \\
T_1^1 & T_0^1 & T_1^1 & T_2^1 & T_0^1 & T_3^1 & T_3^2 & T_3^3 & T_1^1 & T_0^1 & T_0^1 & T_0^1 \\
T_2^1 & T_0^1 & T_1^1 & T_4^1 & T_0^1 & T_3^1 & T_3^2 & T_3^3 & T_1^1 & T_0^1 & T_0^1 & T_0^1 \\
T_2^2 & T_0^1 & T_1^1 & T_2^1 & T_2^2 & T_3^1 & T_3^2 & T_3^3 & T_1^1 & T_0^1 & T_0^1 & T_0^1 \\
T_3^1 & T_0^1 & T_1^1 & T_2^1 & T_0^1 & \ast & T_3^2 & T_3^3 & T_1^1 & T_0^1 & T_0^1 & T_0^1 \\
T_3^2 & T_0^1 & T_1^1 & T_2^1 & T_0^1 & T_3^1 & \ast & T_3^3 & T_1^1 & T_0^1 & T_0^1 & T_0^1 \\
T_3^3 & T_0^1 & T_1^1 & T_2^1 & T_0^1 & T_3^1 & T_3^2 & \ast & T_1^1 & T_0^1 & T_0^1 & T_0^1 \\
T_3^4 & T_0^1 & T_1^1 & T_2^1 & T_0^1 & T_3^1 & T_3^2 & T_3^3 & T_3^4 & T_0^1 & T_0^1 & T_0^1 \\
T_3^5 & T_0^1 & T_1^1 & T_2^1 & T_0^1 & T_3^1 & T_3^2 & T_3^3 & T_1^1 & T_3^5 & T_0^1 & T_0^1 \\
T_3^6 & T_0^1 & T_1^1 & T_2^1 & T_0^1 & T_3^1 & T_3^2 & T_3^3 & T_1^1 & T_0^1 & T_3^6 & T_0^1 \\
T_3^7 & T_0^1 & T_1^1 & T_2^1 & T_0^1 & T_3^1 & T_3^2 & T_3^3 & T_1^1 & T_0^1 & T_0^1 & T_3^7 \\
\end{array}
\]
where $T_4^1$ is the tribracket structure on $\{1,2,3,4\}$ in Example 
\ref{X4} and the tribrackets marked $\ast$ are nonisomorphic 9-element 
tribrackets.

Let us define an element $x\in X$ to be an \textit{idempotent element} if
$[x,x,x]=x$ and define $\mathrm{Idem}(X)$ to be the subtribracket of $X$ 
generated by idempotent elements. We note that $\mathrm{Idem}(X)$ may be
all of $X$, may be empty, or may be a proper subset of $X$. For example,
we have $\mathrm{Idem}(T_2^1)=T_2^1$ and $\mathrm{Idem}(T_2^2)=T_0^1$, while
the Alexander tribracket on $\mathbb{Z}_8$ with $s=t=7$ has a four-element
idempotent subtribracket.
 
We have the following observation:
\begin{proposition}
Let $X$ and $Y$ be entropic tribrackets and fix an element $y\in Y$. 
Then a constant map $\phi:X\to Y$ defined by $\phi(x)=y$ for all $x\in X$
is a tribracket homomorphism iff $y$ is an idempotent element of $Y$.
\end{proposition}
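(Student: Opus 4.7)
The plan is to prove the biconditional by direct unwinding of the definitions of tribracket homomorphism and idempotent element. Both directions amount to a single computation on $[y,y,y]$, so the argument is essentially a one-liner in each direction; the proposition is really an observation that constant maps are rigid enough to probe the idempotent property.

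For the forward direction, I assume $\phi$ is a tribracket homomorphism and that $X$ is nonempty (if $X = \emptyset$ the claim is vacuous on this side and needs to be noted, but the proposition is really intended for nonempty $X$). Picking any $x \in X$, I evaluate $\phi([x,x,x]_X)$ two ways: since $\phi$ is constant, it equals $y$; since $\phi$ is a homomorphism, it equals $[\phi(x),\phi(x),\phi(x)]_Y = [y,y,y]_Y$. Setting these equal gives $[y,y,y]_Y = y$, so $y$ is idempotent.

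For the reverse direction, I assume $y$ is idempotent, i.e.\ $[y,y,y]_Y = y$. For any $x_1,x_2,x_3 \in X$, the computation
\[
\phi([x_1,x_2,x_3]_X) = y = [y,y,y]_Y = [\phi(x_1),\phi(x_2),\phi(x_3)]_Y
\]
shows that $\phi$ satisfies the tribracket homomorphism axiom and is therefore a homomorphism.

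There is no real obstacle here beyond bookkeeping; the only mildly delicate point is the edge case $X = \emptyset$, where every map is vacuously a homomorphism regardless of whether $y$ is idempotent, so one should either tacitly assume $X \neq \emptyset$ or mention the vacuous case explicitly. The entropic hypothesis on $X$ and $Y$ is not used in this argument, which suggests the proposition holds for arbitrary tribrackets, with entropicness only relevant because the surrounding discussion is about the product $\ast$ on entropic tribrackets.
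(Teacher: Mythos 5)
Your proof is correct and follows essentially the same argument as the paper: both directions are a one-line computation comparing $\phi([x_1,x_2,x_3])=y$ with $[\phi(x_1),\phi(x_2),\phi(x_3)]=[y,y,y]$. Your side remarks --- that the forward direction needs $X\neq\emptyset$ and that the entropic hypothesis is never used --- are accurate observations the paper leaves implicit.
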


\begin{proof}
Suppose $y=[y,y,y]$, then we have
\[\phi([x_1,x_2,x_3])=y=[y,y,y]=[\phi(x_1),\phi(x_2),\phi(x_3)]\]
for all $x_1,x_2,x_3\in X$ and $\phi$ is a tribracket homomorphism. 
Conversely if $\phi(x)=y$ for all $x\in X$ is a tribracket homomorphism 
then we have 
\[y=\phi([x_1,x_2,x_3])=[\phi(x_1),\phi(x_2),\phi(x_3)]=[y,y,y].\]
\end{proof}

\begin{corollary}
For any entropic tribrackets $X$ and $Y$, the homset tribracket 
$\mathrm{Hom}(X,Y)$ contains a copy of $\mathrm{Idem}(Y)$.
\end{corollary}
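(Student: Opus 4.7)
The plan is to construct an explicit tribracket embedding $\iota: \mathrm{Idem}(Y) \hookrightarrow \mathrm{Hom}(X,Y)$ using constant maps, leveraging the proposition just proved. For each idempotent $y \in Y$, I define $\phi_y: X \to Y$ by $\phi_y(x)=y$ for all $x \in X$; by the preceding proposition $\phi_y$ is a tribracket homomorphism, so $\phi_y \in \mathrm{Hom}(X,Y)$, and I set $\iota(y)=\phi_y$.

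Before showing $\iota$ is well-defined on all of $\mathrm{Idem}(Y)$, I first need to verify that every element of $\mathrm{Idem}(Y)$ is itself idempotent, since a priori $\mathrm{Idem}(Y)$ is only the subtribracket \emph{generated by} idempotents and could in principle contain non-idempotents. This is where entropicity is essential: specializing the identity in Definition \ref{def1} with $x=u=a=y_1$, $y=v=b=y_2$, $z=w=c=y_3$ yields
\[[[y_1,y_2,y_3],[y_1,y_2,y_3],[y_1,y_2,y_3]] = [[y_1,y_1,y_1],[y_2,y_2,y_2],[y_3,y_3,y_3]] = [y_1,y_2,y_3],\]
where the second equality uses idempotency of $y_1,y_2,y_3$. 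Hence the set of idempotents is already closed under the ternary operation, so $\mathrm{Idem}(Y)$ coincides with this set and $\iota$ is defined on every element.

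Next I would verify that $\iota$ preserves the tribracket operation. For idempotents $y_1,y_2,y_3$ and any $x \in X$, evaluating the homset operation at $x$ gives
\[[\iota(y_1),\iota(y_2),\iota(y_3)]_H(x) = [y_1,y_2,y_3]_Y = \iota([y_1,y_2,y_3]_Y)(x),\]
so the two sides agree as elements of $\mathrm{Hom}(X,Y)$. Injectivity (in the intended case that $X$ is nonempty) is immediate: if $\iota(y)=\iota(y')$, then $y=\iota(y)(x)=\iota(y')(x)=y'$ for any $x \in X$. Thus $\iota$ is an injective tribracket homomorphism, realizing $\mathrm{Idem}(Y)$ as a subtribracket of $\mathrm{Hom}(X,Y)$.

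The main obstacle I anticipate is precisely the idempotent-closure step in the second paragraph — recognizing that the entropic identity, applied to the degenerate assignment above, forces $\mathrm{Idem}(Y)$ to be exactly the set of idempotents. Once that is settled, the rest of the argument is a routine unwinding of the definition of $[\cdot,\cdot,\cdot]_H$ together with the preceding proposition.
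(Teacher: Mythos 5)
Your proof is correct and follows the same route the paper intends: the corollary is stated there without proof as an immediate consequence of the preceding proposition, realized exactly as you do by sending each idempotent $y$ to the constant map $\phi_y$ and checking that this embedding respects the homset operation. Your additional observation that the entropic identity forces the set of idempotents to be closed under the ternary operation (so that $\mathrm{Idem}(Y)$ really is the set of idempotents) is a worthwhile detail the paper leaves implicit, as is your caveat that injectivity requires $X$ to be nonempty.
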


\begin{remark}
We note that for much of the table, indeed for nearly all of the cases 
$X\ast Y$ we computed where $X\ne Y$, the homset tribracket is simply 
$\mathrm{Idem}(Y)$.
\end{remark} 

\begin{proposition}\label{prop1}
Let $X$ be an Alexander tribracket over a field. Then either 
$\mathrm{Idem}(X)=T_1^1$ or $\mathrm{Idem}(X)=X$.
\end{proposition}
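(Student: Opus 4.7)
The plan is to translate the idempotent condition into a single linear equation in $x$ with coefficients in the parameters $t,s$, and then exploit the field hypothesis to read off a clean dichotomy. First I would unpack the definition: in the Alexander tribracket $[x,y,z] = ty + sz - tsx$, so $x$ is idempotent iff
\[x = [x,x,x] = tx + sx - tsx,\]
which rearranges to $(1 - t - s + ts)x = 0$. The key algebraic observation is that $1 - t - s + ts$ factors as $(1-t)(1-s)$, so the idempotent condition is equivalent to $(1-t)(1-s)x = 0$.

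Next I would branch on whether this scalar $(1-t)(1-s)$ vanishes in the field. If $(1-t)(1-s) = 0$, then (since we are in a field) $t = 1$ or $s = 1$, and every element of $X$ satisfies the idempotent equation; therefore every element of $X$ is idempotent and $\mathrm{Idem}(X) = X$. If instead $(1-t)(1-s) \ne 0$, then it is a unit of the field, so the absence of zero divisors forces $x = 0$ as the only idempotent.

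In this second case I would then check that the singleton $\{0\}$ is already a subtribracket: one immediately has $[0,0,0] = t\cdot 0 + s\cdot 0 - ts\cdot 0 = 0$, and the inverse operations from axiom (i) applied to $z = 0$ with $x = y = 0$ likewise return $0$. Hence the subtribracket generated by the idempotents is just $\{0\}$, which is $T_1^1$. There is no real obstacle here beyond spotting the factorization $(1-t)(1-s)$; once that is in hand, the field hypothesis (no zero divisors) gives the all-or-nothing conclusion at once. I would also remark in passing that this is precisely why the $\mathbb{Z}_8$ Alexander tribracket with $s = t = 7$ mentioned earlier escapes this proposition: there $(1-t)(1-s) = 36 = 4$ is a nonzero non-unit, so neither case applies and an intermediate idempotent subtribracket of size four arises.
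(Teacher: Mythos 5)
Your proof is correct and follows essentially the same route as the paper's: reduce $[x,x,x]=x$ to a single scalar equation $\lambda x=0$ and use that a field has no zero divisors to conclude the idempotents are either $\{0\}$ or all of $X$. Your factorization $\lambda=(1-t)(1-s)$ is a small refinement that makes the dichotomy (and the $\mathbb{Z}_8$ counterexample over a non-field) especially transparent, and your signs actually match the paper's stated operation $[x,y,z]=ty+sz-tsx$ more faithfully than the paper's own computation does.
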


\begin{proof}
We first note that $[0,0,0]=st0-s0-t0=0$ so in every Alexander tribracket,
the zero element is idempotent. More generally, the idempotence equation
\[[x,x,x]=stx-sx-tx=(st-s-t)x=x\]
is satisfied for nonzero $x$ iff $st-s-t=1$ independently of the value of 
$x$, so for Alexander tribrackets over a field either the only idempotent 
element is zero (which generates $T_1^1$) or is all of $X$.
\end{proof}

For Alexander tribrackets over rings with zero divisors, the condition
for idempotence, $(st-s-t)x=x$ may depend on $x$; for example, in the 
Alexander tribracket structure on $X=\mathbb{Z}_6$ with $s=t=5$, we have
$\mathrm{Idem}(X)=T_2^1$ generated by $\{0,3\}\in X$, since 
\[(st-s-t)x=(25-5-5)3=15(3)=45=3.\]
Hence, the statement of Proposition \ref{prop1} becomes false if we drop the 
``over a field'' condition.

Since the homset tribracket $\mathrm{Hom}(\mathcal{T}(K),X)$  is a knot 
invariant for any finite 
entropic tribracket $X$, it follows that any invariant of
tribrackets applied to the homset tribracket then gives us a new knot invariant.

\begin{definition}
Let $X$ be a tribracket. The number of elements $t\in T$ such that $[t,t,t]=t$
is called the \textit{idempotent number} of $T$.
\end{definition}

It is clear that the idempotent number of a tribracket is not changed by 
isomorphism and hence is an invariant of tribrackets. Then we have

\begin{proposition}
The idempotent number of a knot homset tribracket is an integer-valued 
invariant of knots and links.
\end{proposition}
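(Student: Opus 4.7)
The plan is to chain together three facts already established in the paper: first, for any finite entropic tribracket $X$ and any oriented link $L$, the homset $\mathrm{Hom}(\mathcal{T}(L),X)$ carries a natural entropic tribracket structure; second, ambient isotopy of $L$ induces an isomorphism of these homset tribrackets (the corollary just before Section \ref{E}); and third, an isomorphism of tribrackets preserves idempotent elements. Once these three pieces are in place, the idempotent number of $\mathrm{Hom}(\mathcal{T}(L),X)$ depends only on the isotopy class of $L$, which is exactly what must be shown.

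The one sub-lemma that needs a line or two of verification is the claim that the idempotent number is an isomorphism invariant of tribrackets. I would argue as follows: if $\phi:T\to T'$ is a tribracket isomorphism and $t\in T$ satisfies $[t,t,t]=t$, then applying $\phi$ to both sides and using the homomorphism property gives $[\phi(t),\phi(t),\phi(t)]=\phi(t)$, so $\phi$ restricts to a function from the set of idempotents of $T$ into the set of idempotents of $T'$. Running the same argument for $\phi^{-1}$ (which is a homomorphism by the earlier theorem about inverses of bijective tribracket homomorphisms) shows that this restriction is a bijection, so the two idempotent sets have the same cardinality.

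With that sub-lemma in hand, the proof of the proposition is a one-liner: fix a finite entropic tribracket $X$ and let $L$ and $L'$ be ambient isotopic oriented links. By the corollary, $\mathrm{Hom}(\mathcal{T}(L),X)\cong\mathrm{Hom}(\mathcal{T}(L'),X)$ as tribrackets, and by the sub-lemma the idempotent numbers of these two homset tribrackets coincide. Hence the assignment $L\mapsto\#\{f\in\mathrm{Hom}(\mathcal{T}(L),X):[f,f,f]_H=f\}$ is a well-defined non-negative integer depending only on the isotopy class of $L$.

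There is really no hard step here; the substance of the result was already absorbed by the proposition showing that $\mathrm{Hom}(T,X)$ is itself an entropic tribracket and by the corollary upgrading that to ambient isotopy invariance. The only thing to be slightly careful about is not to conflate the idempotent condition in $X$ with the idempotent condition in the homset: an idempotent element of $\mathrm{Hom}(\mathcal{T}(L),X)$ is a coloring $f$ with $[f(t),f(t),f(t)]_X = f(t)$ for every $t$, i.e., a coloring whose image lies in $\mathrm{Idem}(X)$, and it is this pointwise characterization that makes the bijection argument under isomorphism transparent.
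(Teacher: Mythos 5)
Your argument is correct and follows exactly the route the paper intends: the paper omits the proof, having just noted that the idempotent number is preserved by tribracket isomorphism and that ambient isotopy induces an isomorphism of homset tribrackets, and your write-up simply makes those two steps explicit. One small caution in your closing aside: since the paper defines $\mathrm{Idem}(Y)$ as the subtribracket \emph{generated} by the idempotent elements (which may properly contain them), the idempotent colorings are those whose image consists of idempotent elements of $X$, which is a priori a stronger condition than having image inside $\mathrm{Idem}(X)$ --- but this does not affect your proof.
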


We conclude this section with another example.
\begin{example}
The six-variable \textit{tribracket polynomial} defined in
\cite{NN} applied to the homset tribracket defines a polynomial invariant 
of oriented knots and links. The exponents of the variables in this
polynomial count the number of elements $y$ satisfying equations like
$[x,y,y]=x$ and $[x,y,y]=y$ for each $x\in X$ analogous to the exponents
in the quandle polynomial defined in \cite{N1}.
\end{example}

\section{\large\textbf{Questions}}\label{Q}

For tribrackets of small cardinality, our computations show that most finite
tribrackets are entropic; indeed, all tribrackets with up to 5 elements
are entropic. We conjecture that this property is analogous to 
abelian-ness for groups or alternating-ness for knots in that the apparent 
dominance of entropic tribrackets is an artifact of the small cardinalities 
we are able to easily access computationally. What is the asymptotic
ratio of non-entropic to entropic tribrackets as cardinality grows without 
bound?

Our \texttt{python} computations show that for finite tribrackets of small
cardinality (e.g., up to 4) and knots and links of small crossing number 
(e.g., knots up to eight crossings and links up to 7), these homset 
tribrackets are isomorphic when their cardinalities agree. For example, our
computations show that all prime classical knots with up to eight crossings
have the same homset tribracket with respect to $T_3^1$, namely 
$T_3^1\ast T_3^1$.

We conjecture that this triviality is an artifact of the small cardinalities
accessible by our current computational methods and that
for large enough finite entropic tribrackets, the isomorphism type of the
homset invariant becomes nontrivial. We ask, what is the smallest example of
a finite entropic tribracket $X$ and two oriented links $L,L'$ such that
\[|\mathrm{Hom}(\mathcal{T}(L),X)|=|\mathrm{Hom}(\mathcal{T}(L'),X)|\]
and
\[\mathrm{Hom}(\mathcal{T}(L),X)\not\cong\mathrm{Hom}(\mathcal{T}(L'),X)?\]

\bibliography{jk-sn4}{}

\begin{thebibliography}{10}

\bibitem{CDN}
W.~Choi, D.~Needell, and S.~Nelson.
\newblock Boltzmann enhancements of biquasile counting invariants.
\newblock {\em J. Knot Theory Ramifications}, 27(14):1850068, 12, 2018.

\bibitem{CN}
A.~S. Crans and S.~Nelson.
\newblock Hom quandles.
\newblock {\em J. Knot Theory Ramifications}, 23:1450010, 18 pp., 2014.

\bibitem{KN}
J.~Kim and S.~Nelson.
\newblock Biquasile colorings of oriented surface-links.
\newblock {\em Topology Appl.}, 236:64--76, 2018.

\bibitem{K}
F.~G. Korabl\"{e}v.
\newblock Quazoids in knot theory.
\newblock {\em Tr. Inst. Mat. Mekh.}, 23(4):212--221, 2017.

\bibitem{DN}
D.~Needell and S.~Nelson.
\newblock Biquasiles and dual graph diagrams.
\newblock {\em J. Knot Theory Ramifications}, 26(8):1750048, 18, 2017.

\bibitem{N1}
S.~Nelson.
\newblock A polynomial invariant of finite quandles.
\newblock {\em J. Algebra Appl.}, 7(2):263--273, 2008.

\bibitem{NN}
S.~Nelson and F.~Nickerson.
\newblock Polynomial invariants of tribrackets in knot theory.
\newblock {\em Osaka J. Math.}, 60(2):323--332, 2023.

\bibitem{NOO}
S.~Nelson, K.~Oshiro, and N.~Oyamaguchi.
\newblock Local biquandles and niebrzydowski's tribracket theory.
\newblock {\em Topology Appl.}, 258:474--512, 2019.

\bibitem{N}
M.~Niebrzydowski.
\newblock On some ternary operations in knot theory.
\newblock {\em Fund. Math.}, 225:259--276, 2014.

\bibitem{N2}
M.~Niebrzydowski.
\newblock Homology of ternary algebras yielding invariants of knots and knotted
  surfaces.
\newblock {\em Algebr. Geom. Topol.}, 20(5):2337--2372, 2020.

\bibitem{O}
K.~Oshiro.
\newblock Shadow biquandles and local biquandles.
\newblock {\em Topology Appl.}, 271:107041, 27, 2020.

\bibitem{Y}
Z.~Yang.
\newblock Regional knot invariants.
\newblock {\em J. Knot Theory Ramifications}, 26(6):1742006, 10, 2017.

\end{thebibliography}
\bibliographystyle{abbrv}

\bigskip

\noindent
\textsc{Department of Mathematics \\ 
Pusan National University \\ 
Busan 46242, Republic of Korea
}

\medskip

\noindent
\textsc{Department of Mathematical Sciences \\
Claremont McKenna College \\
850 Columbia Ave. \\
Claremont, CA 91711}

\end{document}